%%%%%%%%%%%%%%%%%%%%%%%%%%%%%%%%%%%%%%%%%%%%%%%%%%%%%%%%%%%%%%%%%%%%%%%%%%%%%%%%
%2345678901234567890123456789012345678901234567890123456789012345678901234567890
%        1         2         3         4         5         6         7         8

\documentclass[letterpaper, 10 pt, conference]{ieeeconf}  % Comment this line out if you need a4paper

\IEEEoverridecommandlockouts                              % This command is only needed if
                                                          % you want to use the \thanks command

\overrideIEEEmargins                                      % Needed to meet printer requirements.

%In case you encounter the following error:
%Error 1010 The PDF file may be corrupt (unable to open PDF file) OR
%Error 1000 An error occurred while parsing a contents stream. Unable to analyze the PDF file.
%This is a known problem with pdfLaTeX conversion filter. The file cannot be opened with acrobat reader
%Please use one of the alternatives below to circumvent this error by uncommenting one or the other
%\pdfobjcompresslevel=0
%\pdfminorversion=4

% See the \addtolength command later in the file to balance the column lengths
% on the last page of the document

\usepackage{graphics} % for pdf, bitmapped graphics files
\usepackage{epsfig} % for postscript graphics files
\usepackage{mathptmx} % assumes new font selection scheme installed
\usepackage{times} % assumes new font selection scheme installed
\usepackage{amsmath} % assumes amsmath package installed
\usepackage{amssymb}  % assumes amsmath package installed
\newcommand{\ltwo}{{\ell^2}}

\newtheorem{theorem}{Theorem}%[section]
\newtheorem{lemma}{Lemma}

\newtheorem{assumption}{Assumption}

\title{\LARGE \bf
A Dynamic Observer for a Class of Infinite-Dimensional Vibrating Flexible Structures}

\author{Alexander Zuyev$^{1,2,3}$ and Julia Kalosha$^{2,3}$% <-this % stops a space
\thanks{$^{1}$Otto von Guericke University Magdeburg, Universitätsplatz 2, 39106 Magdeburg, Germany
        {\tt\small zuyev@mpi-magdeburg.mpg.de}}%
\thanks{$^{2}$Institute of Applied Mathematics \& Mechanics, National Academy of Sciences of Ukraine, G.\,Batiuka St., 19, Sloviansk, Ukraine 84100
        {\tt\small julykucher@gmail.com}}%
\thanks{$^{3}$Max Planck Institute for Dynamics of Complex Technical Systems, Sandtorstra{\ss}e 1, 39106 Magdeburg, Germany
 \newline The first author was supported by the German Research Foundation (DFG) under Grant ZU 359/2-1.
 The second author is partially supported by Grant EFDS-FL2-08 of the European Federation of Academies of Sciences and Humanities (ALLEA).
        }%
}

\begin{document}

\maketitle
\thispagestyle{empty}
\pagestyle{empty}

%%%%%%%%%%%%%%%%%%%%%%%%%%%%%%%%%%%%%%%%%%%%%%%%%%%%%%%%%%%%%%%%%%%%%%%%%%%%%%%%
\begin{abstract}
Infinite-dimensional control systems with outputs are considered in the Hamiltonian formulation with generalized coordinates.
An explicit scheme for constructing a dynamic observer for this class of systems is proposed with arbitrary gain coefficients.
Sufficient conditions for the convergence of the constructed observer are obtained on the basis of the invariance principle.
This result is applied to a flexible beam model attached to a mass-spring system with lumped and distributed actuators.
The estimation error decay is illustrated with numerical simulations of finite-dimensional approximations of the observer dynamics.
\end{abstract}

%%%%%%%%%%%%%%%%%%%%%%%%%%%%%%%%%%%%%%%%%%%%%%%%%%%%%%%%%%%%%%%%%%%%%%%%%%%%%%%%
\section{INTRODUCTION}

Since the full state of a dynamical system is not available for direct measurement in many applied problems,
it becomes crucial for optimal control and stabilization to estimate the state in a reliable manner employing observations of a limited number of system outputs.
A classical method of estimating the state vector using the inputs and outputs, proposed by D.~Luenberger~\cite{L1964}, has been widely extended for various classes of control systems, including distributed parameter systems~\cite{ZBPR2015}--\cite{YW2019}.
Note that the Kalman observability rank condition or Hautus test are not directly applicable for control systems with infinite degrees of freedom. So, powerful operator techniques have been developed to unfold infinite-dimensional modifications of classical methods (see, e.g.,\cite{DM1972}--
\cite{DSX2006}).

In recent years, a variety of results in observer design and observer-based control have been obtained for specific engineering problems.
A Luenberger-type observer is proposed in~\cite{ZS2007} with the use of the Lyapunov method for a flexible structure modeled by the Timoshenko beam.
A feedback control scheme based on the disturbance observer is designed for a flexible spacecraft subject to external disturbance, input magnitude, and rate constraints in~\cite{LLW2018}.
A reduced-order nonlinear state observer is proposed in~\cite{PRT2018} for a flexible-link multibody system by using model reduction techniques.
In the paper~\cite{MTG2020}, a nonlinear observer based control is presented for a rotating flexible structure modeled as the Timoshenko beam. The exponential convergence of the estimation scheme and the asymptotic stability of the closed-loop system are proved in this work.
A state estimator for the motion of a slewing flexible structure with rigid body is designed in~\cite{DBBSB2021}.
In~\cite{P2022}, the Luenberger observer is used for estimating the internal state of a ``black-box'' linear system, and the optimal feedback control is proposed. Stability under the Luenberger observer parametrization is proved based Lyapunov stability theory.
In all these works results are experimentally validated.

The observability and controllability of diagonal systems with the infinite-dimensional state and finite-dimensional output are investigated in~\cite{JZ2001f},~\cite{JZ2001_1}. In these papers, necessary and sufficient conditions for the exact observability are extended to the systems that are not necessarily exponentially stable. The above results are related to an infinite-dimensional version of the Hautus test proposed by D.L.~Russell and G.~Weiss in~\cite{RW1994}. Note that in the works of B.~Jacob and H.~Zwart, the exact observability problem is solved for diagonal systems without input and with the output signal of class $L^2(0,\infty)$.
%One of the equivalent conditions is that the system of exponents of eigenvalues form a Riesz basis in $L^2$.

In the present work, we will propose an explicit observer design for a controlled Hamiltonian system
without the requirement that the output signal is in $L^2(0,\infty)$. While our assumptions are weaker than in~\cite{JZ2001f},~\cite{JZ2001_1}, we do not formulate exact observability conditions and prove that the observation error tends to zero only asymptotically.

This paper may be contemplated as a further extension of the approach developed in~\cite{ZK2021} to distributed parameter systems.
In Section~\ref{sec_Hamiltonian}, we present a linear control system with the parameterized Hamiltonian operator and a finite-dimensional output.
It is shown that the considered system generates a $C_0$-semigroup of operators in the real Hilbert space $\ltwo$  under a suitable assumption on the parameters.
Then we describe the observer design procedure in Section~\ref{sec_observer}.
Our stability analysis of the error dynamics is based on LaSalle's invariance principle (or an infinite-dimensional version of the Barbashin--Krasovskii theorem) in Section~\ref{sec_stability}.
A key part in the stability proof (proof of Theorem~1) requires the precompactness of trajectories, which is formally addressed in Section~\ref{sec_compact}.
Conditions for the observer convergence are discussed in terms of parameters of the control system.

Section~\ref{sec_beam} is devoted to a particular case, if the considered class of systems describes vibrations of the flexible beam with an attached mass discussed in~\cite{KZ2021}. In this case, parameters of the infinitesimal generator arise as eigenvalues of a fourth-order differential operator. It has been proved in~\cite{ZK2021} that the above operator is self-adjoint and positive, so its eigenvalues form a sequence of positive real numbers. Besides, it has been proved in~\cite{KZB2021} that all roots of the corresponding characteristic equation are simple and satisfy certain growth condition.
As a consequence of the eigenvalue distribution added to a natural assumption that at least one output coefficient is nonzero for each mode,
the observer convergence is established in Theorem~2.
The observation error dynamics for the considered beam model is illustrated by numerical simulations in Section~\ref{sec_simulations}.

%
%We propose an approach for constructing a Luenberger-type observer for a class of time-invariant linear systems with the state vector from the space $l^2$.
%In order to prove that the observer approximates the initial state vector effectively we analyse a linear system that describes the observation error dynamics.
%The aim of this work is to prove that the error dynamics for the proposed state estimator has an asymptotically stable trivial equilibrium under some assumptions on the output matrix.
%

%We provide an application of the result to a model of a flexible beam attached to a rigid body and distributed sensors.
%Some assumptions on the output matrix are asserted for the sake of generality while these conditions evolve from the eigenfrequencies distribution for the particular model of the flexible structure investigated in previous works.

\section{INFINITE-DIMENSIONAL CONTROL SYSTEM WITH OUTPUT}\label{sec_Hamiltonian}

Consider an infinite-dimensional control system of the form
\begin{equation}\label{linsys}
	\dot z=Az+Bu,\quad z\in H=\ell^2\times \ell^2,\quad u\in{\mathbb R}^{k+1},
\end{equation}
\begin{equation}\label{output}
	y =Cz,\quad y\in{\mathbb R}^r,\quad k,r\in \mathbb N,
\end{equation}
where the state vector $z=\left(
\begin{array}{c}
	\xi \\
	\eta \\
\end{array}
\right)$ consists of two components ${\xi=(\xi_1,\xi_2,\dots)^T\in \ell^2}$ and ${\eta=(\eta_1,\eta_2,\dots)^T\in \ell^2}$,
the control $u=(u_0,u_1,\dots,u_k)^T$ and the output $y=(y_1,\dots,y_r)^T$ are finite-dimensional vectors.
The inner product in the real Hilbert space $H$ is inherited from $\ell^2$:
$$
\left\langle \left(\begin{array}{c}
	\xi \\
	\eta \\
\end{array}\right),\left(\begin{array}{c}
	\tilde \xi \\
	\tilde \eta \\
\end{array}\right)\right\rangle_H =  \sum_{j=1}^\infty\left(\xi_j\tilde \xi_j + \eta_j\tilde \eta_j \right),
$$
and the operators $A:D(A)\to H$, $B:\mathbb R^{k+1}\to H$, $C:H\to \mathbb R^r$ are defined in terms of the $\xi$- and $\eta$-component of $z\in H$ as
$$
A: z=\begin{pmatrix}\xi \\ \eta \end{pmatrix} \mapsto  Az = \begin{pmatrix} \Omega \eta \\ -\Omega \xi \end{pmatrix},\; Bu = \begin{pmatrix}0\\ B_1 u \end{pmatrix},\; C z = C_1 \xi,
$$
where $\Omega = {\rm diag}(\omega_1,\omega_2,...),$ and the operators $B_1:\mathbb R^{k+1}\to \ltwo$, $C_1:\ltwo\to \mathbb R^r$ are given by their matrices
$$
B_1=\left(\begin{array}{cccc}
	b_{10} & b_{11} & \ldots & b_{1k} \\
	b_{20} & b_{21} & \ldots & b_{2k}\\
	\vdots & \vdots & \vdots & \vdots
\end{array}\right),\quad
\sum\limits_{j=1}^\infty b_{ji}^2<\infty,\; i=0,\dots,k,$$
$$
C_1=\left(\begin{array}{llc}
	c_{11} & c_{12} &  \ldots \\
	\vdots & \vdots  & \vdots \\
	c_{r1} & c_{r2} &  \ldots \end{array}\right),
\quad
\sum\limits_{j=1}^\infty c_{sj}^2<\infty,\; s=1,\dots,r.
$$
Thus,
$$D(A)=\left\{z=\left(
	\begin{array}{c}
		\xi \\
		\eta \\
	\end{array}
	\right)\in H:\sum\limits_{j=1}^\infty\omega_j^2(\xi_j^2+\eta_j^2)<\infty\right\}.
$$
System~\eqref{linsys} is considered as a mathematical model of vibrating flexible structures with force actuation and displacement measurement.
We assume that the spectrum of $A$ is purely imaginary without resonances, which is formalized in the following way.

\begin{assumption}\label{assumption1}
	The diagonal entries $\omega_j$ of $\Omega$ are mutually distinct positive real numbers such that
$${0<\omega_1<\omega_2<\dots <\omega_n <\dots}\;.$$
\end{assumption}

It will be shown in section~\ref{sec_beam} that Assumption~\ref{assumption1} is fulfilled for the particular system due to properties of the infinitesimal generator.

Unlike finite-dimensional ODE systems with smooth right-hand side, for which the conditions of existence and uniqueness of the Cauchy problem solution are clear, the question about well-posedness of the Cauchy problem for infinite-dimensional systems becomes one of the major tasks. For dealing with this the semigroup representation is commonly used.

\begin{lemma}\label{le_generator}
	The operator $A$
	generates a $C_0$-semigroup $\{e^{tA}\}_{t\geq0}$ on $H$.
\end{lemma}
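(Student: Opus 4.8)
The plan is to recognize $A$ as a block-diagonal skew-symmetric operator and to apply the Lumer--Phillips theorem. Concretely, I would verify three things: that $A$ is densely defined, that $A$ is dissipative, and that $\operatorname{ran}(\lambda I-A)=H$ for some $\lambda>0$. Density is immediate, since $D(A)$ contains every finitely supported sequence, and these are dense in $H=\ell^2\times\ell^2$. It is also worth noting in passing that $A$ is closed: because $\Omega=\operatorname{diag}(\omega_1,\omega_2,\dots)$ acts componentwise, if $z_n\to z$ and $Az_n\to w$ in $H$ then componentwise passage to the limit forces $w=(\Omega\eta,-\Omega\xi)$ with $\|w\|_H^2=\sum_j\omega_j^2(\xi_j^2+\eta_j^2)<\infty$, i.e.\ $z\in D(A)$ and $Az=w$.

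Dissipativity is a one-line computation: for $z=(\xi,\eta)\in D(A)$,
\[
\langle Az,z\rangle_H=\sum_{j=1}^\infty\bigl(\omega_j\eta_j\,\xi_j-\omega_j\xi_j\,\eta_j\bigr)=0,
\]
so $A$ is dissipative --- indeed skew-symmetric --- and the same holds for $-A$. The only step with content is the range condition. Fix $\lambda=1$ and $(f,g)\in H$; the equation $(I-A)(\xi,\eta)=(f,g)$ decouples across modes into $\xi_j-\omega_j\eta_j=f_j$, $\omega_j\xi_j+\eta_j=g_j$, with unique solution $\xi_j=(f_j+\omega_j g_j)/(1+\omega_j^2)$, $\eta_j=(g_j-\omega_j f_j)/(1+\omega_j^2)$. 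Then
\[
\sum_{j=1}^\infty\omega_j^2\bigl(\xi_j^2+\eta_j^2\bigr)=\sum_{j=1}^\infty\frac{\omega_j^2}{1+\omega_j^2}\bigl(f_j^2+g_j^2\bigr)\le\|(f,g)\|_H^2<\infty,
\]
so $z=(\xi,\eta)\in D(A)$ and $I-A$ is onto (replacing $\omega_j$ by $-\omega_j$ shows $I+A$ is onto as well).

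By the Lumer--Phillips theorem, $A$ generates a $C_0$-semigroup of contractions on $H$; since $-A$ generates one too, $\{e^{tA}\}_{t\ge0}$ in fact extends to a $C_0$-group of isometries, consistent with the explicit block-diagonal rotations $\bigl(\begin{smallmatrix}\cos\omega_j t&\sin\omega_j t\\-\sin\omega_j t&\cos\omega_j t\end{smallmatrix}\bigr)$ obtained by solving the decoupled modes. The one thing to watch is that the resolvent equation not lose regularity --- that the solution of $(I-A)z=(f,g)$ truly lies in $D(A)$ and not merely in $H$ --- but the uniform bound $\omega_j^2/(1+\omega_j^2)\le1$ disposes of this, and, notably, this argument uses only $\omega_j\in\mathbb R$, not the strict ordering in Assumption~\ref{assumption1}.
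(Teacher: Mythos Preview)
Your proof is correct and follows essentially the same route as the paper: verify density of $D(A)$, dissipativity via $\langle Az,z\rangle_H=0$, closedness, and surjectivity of $I-A$ by explicitly solving the decoupled modal equations, then invoke the Lumer--Phillips theorem. The paper differs only in bookkeeping --- it obtains closedness from the boundedness of $A^{-1}$ rather than by a direct limit argument, and it writes out the resolvent $(I-\lambda A)^{-1}$ for general $\lambda>0$ instead of fixing $\lambda=1$ --- while your added remarks that $-A$ also generates (yielding a unitary $C_0$-group) and that the argument needs only $\omega_j\in\mathbb R$ are correct bonuses not stated in the paper.
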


\begin{proof}
	The domain $D(A)$ is dense in $H$.
	For any $z\in D(A)$, we have
	\begin{equation}\label{dissip}
		\langle Az, z\rangle_{H}\equiv 0,
	\end{equation}
    so the map $A$ is dissipative.
	
	The inverse operator $A^{-1}:H\to H$ is defined as follows:
	$$A^{-1}=\left(
	\begin{array}{cc}
		0 & -\tilde{A}_1 \\
		\tilde{A}_1 & 0 \\
	\end{array}
	\right),$$
	where $\tilde{A}_1={\rm diag}\left(\frac1{\omega_1},\frac1{\omega_2},\dots\right)$.
	
	It is easy to verify that the following estimate is fulfilled:
	$$\|A^{-1}z\|^2\leq\frac1{\omega_1^2}\;\|z\|^2,\quad\forall\; z\in H,$$
	so $A^{-1}$ is bounded, therefore the operator $A$ is closed in $H$.
	
	The resolvent $Res(A)$ of $A$ may be constructed as the map $Res(A)=(I-\lambda A)^{-1}:H\to H$ with the components
	\begin{equation*}
	Res(A)=\left(
	\begin{array}{cc}
		R_1 & R_2 \\
		-R_2 & R_1 \\
	\end{array}
	\right),
	\end{equation*}
	where
	$$\begin{aligned}
		R_1&={\rm diag}\left(\frac1{\lambda^2\omega_j^2+1},\;j=1,2,\dots\right),\\
		R_2&={\rm diag}\left(\frac{\lambda\omega_j}{\lambda^2\omega_j^2+1},\;j=1,2,\dots\right)
	\end{aligned}$$
	for some $\lambda>0$. Here and in the sequel, $I$ is the identity operator on $H$.
	
	As the transformation $z\mapsto Res(A)(z)$ is defined for any $z\in H$,
	the range of $I-\lambda A$ coincides with $H$. Thus, the map $A$ is maximal. Moreover, because of condition~\eqref{dissip}, $A$ is $m$-dissipative. Being densely defined, $m$-dissipative, and closed, the operator $A$ satisfies the conditions of the Lumer--Phillips theorem and thus generates a $C_0$-semigroup on $H$.
\end{proof}

\section{LUENBERGER OBSERVER DESIGN}\label{sec_observer}

Our aim is to construct a Luenberger-type observer for system~\eqref{linsys} in the form
\begin{equation}\label{observer}
	\dot{\bar z}(t)=(A-FC)\bar z(t)+Bu(t)+Fy(t),
\end{equation}
such that, for any initial conditions $z(0),\bar z(0)\in H$ and any admissible control $u: [0,+\infty)\to {\mathbb R}^{k+1}$, the corresponding solutions $z(t)$ and $\bar z(t)$ of~\eqref{linsys},~\eqref{output} and~\eqref{observer} satisfy the property
\begin{equation}\label{conv}
\|z(t)-\bar z(t)\|\to 0 \quad\; \text{as}\; t\to+\infty.
\end{equation}
For this purpose, we will define the operator $F:{\mathbb R}^{r}\to H$ as
$F=\left(
\begin{array}{c}
	f \\
	g \\
\end{array}
\right)$ with $f,g:{\mathbb R}^{r}\to \ltwo$ given by their matrices
$$f=\left(
\begin{array}{ccc}
	f_{11} & \dots & f_{1r} \\
	f_{21} & \dots & f_{2r} \\
	\vdots & \vdots & \vdots \\
\end{array}
\right)\quad \text{and}\quad
g=\left(
\begin{array}{ccc}
	g_{11} & \dots & g_{1r} \\
	g_{21} & \dots & g_{2r} \\
	\vdots & \vdots & \vdots \\
\end{array}
\right),$$
where
\begin{equation}\label{obsmatr}
	f_{js}=\gamma_sc_{sj},\quad g_{js}=0,\quad s=\overline{1,r},\quad j=1,2,\dots,
\end{equation}
and $\gamma_s>0$ are gain parameters.

Let us consider the observation error ${e(t)=z(t)-\bar z(t)}$ and denote
$e(t)=\left(
\begin{array}{c}
	\Delta \\
	\delta \\
\end{array}
\right)$, where ${\Delta=(\Delta_1,\Delta_2,\dots)^T}$, ${\delta=(\delta_1,\delta_2,\dots)^T}$. Then the error dynamics can be written as follows:
\begin{equation}\label{error}
	\dot e(t)=(A-FC)e(t).
\end{equation}
The property~\eqref{conv} is equivalent to
$$\|e(t)\|\to0\quad\text{as}\quad t\to\infty,$$
where $e(t)$ is the solution of~\eqref{error}.

\section{PRECOMPACTNESS OF THE TRAJECTORIES}\label{sec_compact}
In this section, we will construct the resolvent $Res(\hat A)$ of the operator $\hat A=A-FC$ and prove its compactness for some $\lambda>0$.
This result will be then applied in Section~\ref{sec_stability} for the proof of our main result (Theorem~1).

In order to construct $Res(\hat A)$, we solve the equation
\begin{equation}\label{erres}
	(\hat A-\lambda I)\left(
	\begin{array}{c}
		\Delta \\
		\delta \\
	\end{array}
	\right)=\left(
	\begin{array}{c}
		\bar\Delta \\
		\bar\delta \\
	\end{array}
	\right)
\end{equation}
with respect to $(\Delta,\delta)^T\in H$ for any given $(\bar\Delta,\bar\delta)^T\in H$, $\lambda>0$.

Let us define the matrix $$M=(M_{sp})=\lambda\gamma_p\sum\limits_{i=1}^\infty\frac{c_{si}c_{pi}}{\lambda^2+\omega_i^2}+{\bf\delta_{sp}},\quad s,p=\overline{1,r}$$
and its inverse $M^{-1}=(M^{-1}_{sp})_{s,p=\overline{1,r}}$.
The symbol ${\bf\delta_{sp}}$ denotes the Kronecker delta.

Note that, for small $\lambda>0$, the matrix $M$ is a small disturbance of the identity matrix, so the inverse matrix $M^{-1}$ exists and satisfies the norm estimate
\begin{equation*}
	\|M^{-1}\|\leq1+\mathcal{O}(\lambda) \quad \text{as}\;\lambda\to0.
\end{equation*}

The solution of~\eqref{erres} can be presented as follows:
\begin{equation}\label{erres_sol}
	\begin{aligned}
		\Delta_j&=-\frac1{\lambda^2+\omega_j^2}\left(\lambda\bar\Delta_j+\omega_j\bar\delta_j+\lambda\sum\limits_{s=1}^r\gamma_sc_{sj}\phi_s\right),\\
		\delta_j&=\frac1{\lambda^2+\omega_j^2}\left(\omega_j\bar\Delta_j-\lambda\bar\delta_j+\omega_j\sum\limits_{s=1}^r\gamma_sc_{sj}\phi_s\right).
	\end{aligned}
\end{equation}
The parameters $\phi_s$, $s=\overline{1,r}$, can be obtained from the following equation:
$\left(\phi_1,\dots,\phi_r\right)^T=M^{-1}\bar M$,
where $\bar M=-{\rm colon}\left(\sum\limits_{i=1}^\infty\frac{c_{si}}{\lambda^2+\omega_i^2}\left(\lambda\bar\Delta_i+\omega_i\bar\delta_i\right),\;s=\overline{1,r}\right)$.
So,
$$\phi_s=-\sum\limits_{p=1}^rM^{-1}_{sp}\sum\limits_{i=1}^\infty\frac{c_{pi}}{\lambda^2+\omega_i^2}\left(\lambda\bar\Delta_i+\omega_i\bar\delta_i\right)\quad s=\overline{1,r}.$$

The resolvent of $\hat A$ is acting  as
\begin{equation*}
	Res(\hat A)=\left(
	\begin{array}{cc}
		R^1 & R^2 \\
		R^3 & R^4 \\
	\end{array}
	\right),
\end{equation*}
where
\begin{equation*}
	\begin{aligned}
		&R^1_{ji}=\frac\lambda{\lambda^2+\omega_j^2}\left(\frac\lambda{\lambda^2+\omega_i^2}\sum\limits_{s,p=1}^r\gamma_sM^{-1}_{sp}c_{sj}c_{pi}-{\bf\delta_{ji}}\right),\\
		&R^2_{ji}=\frac1{\lambda^2+\omega_j^2}\left(\frac{\lambda\omega_i}{\lambda^2+\omega_j^2}\sum\limits_{s,p=1}^r\gamma_sM^{-1}_{sp}c_{sj}c_{pi}-\omega_j{\bf\delta_{ji}}\right),\\
		&R^3_{ji}=-\frac{\omega_j}\lambda R^1_{ji},\\
		&R^4_{ji}=-\frac1{\lambda^2+\omega_j^2}\left(\frac{\omega_j\omega_i}{\lambda^2+\omega_i^2}\sum\limits_{s,p=1}^r\gamma_sM^{-1}_{sp}c_{sj}c_{pi}+\lambda{\bf\delta_{ji}}\right).
	\end{aligned}
\end{equation*}

As Assumption~\ref{assumption1} is not be sufficient to guarantee the convergence of series appearing in~\eqref{erres_sol}, we introduce the following
\begin{assumption}\label{assumption_cnv}
	The series $\sum\limits_{i=1}^\infty\frac1{\omega_i^2}$ is convergent.
\end{assumption}

\begin{lemma}
	If Assumption~\ref{assumption_cnv} is fulfilled, then each positive semitrajectory $\{e(t)\}_{t\ge 0}$
	of system~\eqref{error} is precompact in $H$.
\end{lemma}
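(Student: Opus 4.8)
The plan is to deduce precompactness of $\{e(t)\}_{t\ge0}$ from the compactness of the resolvent $Res(\hat A)$ of $\hat A = A-FC$. The standard route is: first show $Res(\hat A)$ is a compact operator on $H$ for some $\lambda>0$; then invoke the fact that a bounded $C_0$-semigroup with compact resolvent has precompact orbits. Boundedness of the semigroup $\{e^{t\hat A}\}_{t\ge0}$ follows because $\hat A$ is $m$-dissipative: indeed $\langle \hat A e, e\rangle_H = \langle Ae,e\rangle_H - \langle FCe, e\rangle_H$, and using \eqref{dissip} together with $\langle FCe,e\rangle_H = \sum_{s=1}^r \gamma_s \bigl(\sum_j c_{sj}\Delta_j\bigr)^2 \ge 0$ (recall $f_{js}=\gamma_s c_{sj}$, $g_{js}=0$, and $Ce = C_1\Delta$), we get $\langle \hat A e,e\rangle_H \le 0$, so $\|e^{t\hat A}e\| \le \|e\|$ for all $t\ge 0$. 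Hence every positive semitrajectory of \eqref{error} is bounded, and it remains to establish compactness of the resolvent.

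For the resolvent, I would use the explicit block form $Res(\hat A) = \begin{pmatrix} R^1 & R^2 \\ R^3 & R^4 \end{pmatrix}$ computed above. Each block is an infinite matrix $(R^\ell_{ji})$, and I would show each is a Hilbert--Schmidt operator on $\ell^2$, i.e. $\sum_{i,j} (R^\ell_{ji})^2 < \infty$; Hilbert--Schmidt operators are compact, and a finite block assembly of compact operators is compact. The dominant terms are the diagonal Kronecker-delta parts: for $R^1$ and $R^4$ the diagonal entry behaves like $\lambda/(\lambda^2+\omega_j^2)$, so $\sum_j \bigl(\lambda/(\lambda^2+\omega_j^2)\bigr)^2 \le \lambda^2\sum_j 1/\omega_j^4 \le \lambda^2 (\sum_j 1/\omega_j^2)^2 < \infty$ by Assumption~\ref{assumption_cnv}. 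For $R^2$ the diagonal entry is $-\omega_j/(\lambda^2+\omega_j^2)$, which is $O(1/\omega_j)$ and hence square-summable directly by Assumption~\ref{assumption_cnv}; $R^3 = -\tfrac{\omega_j}{\lambda} R^1$ has diagonal part $O(1/\omega_j)$ as well. For the off-diagonal ``rank-$\le r$'' correction terms of the form $\tfrac{1}{\lambda^2+\omega_j^2}\,(\text{bounded})\,\tfrac{\lambda\omega_i \text{ or }\lambda}{\lambda^2+\omega_i^2}\sum_{s,p}\gamma_s M^{-1}_{sp} c_{sj}c_{pi}$, I would factor out the bounded matrix $M^{-1}$ (bounded since $\|M^{-1}\|\le 1+\mathcal{O}(\lambda)$) and the gains, then bound the Hilbert--Schmidt norm by a product of two convergent sums: one over $j$ of $\tfrac{c_{sj}^2}{(\lambda^2+\omega_j^2)^2}$ (convergent since $\sum_j c_{sj}^2<\infty$ and the prefactor is bounded), and one over $i$ of $\tfrac{\omega_i^2 c_{pi}^2}{(\lambda^2+\omega_i^2)^2}$ or $\tfrac{c_{pi}^2}{(\lambda^2+\omega_i^2)^2}$ (convergent likewise). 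Summing over the finitely many indices $s,p,\ell$ keeps everything finite.

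The main obstacle is bookkeeping rather than a conceptual difficulty: one must check that every one of the four blocks is Hilbert--Schmidt, paying attention to the worst-decaying terms — namely the $O(1/\omega_j)$ diagonal terms in $R^2$ and $R^3$, which is precisely where Assumption~\ref{assumption_cnv} ($\sum 1/\omega_i^2<\infty$) is indispensable, and Assumption~\ref{assumption1} alone would not suffice. Once compactness of $Res(\hat A)$ is in hand, I would conclude: pick the $\lambda>0$ used above; then for any $e_0\in H$ the orbit $\{e^{t\hat A}e_0\}_{t\ge0}$ is bounded (by $m$-dissipativity), and the semigroup commutes with the compact operator $Res(\hat A)$, so $Res(\hat A)\{e^{t\hat A}e_0\}$ is precompact; since $Res(\hat A)$ is boundedly invertible with $\hat A$-domain dense and the bounded orbit lies in a set on which $Res(\hat A)^{-1} = I-\lambda\hat A$ is controlled — more cleanly, one notes $e^{t\hat A}e_0 = Res(\hat A)\,(I-\lambda\hat A)e^{t\hat A}e_0$ and that $\tfrac{d}{dt}e^{t\hat A}e_0 = \hat A e^{t\hat A}e_0$ stays bounded when $e_0\in D(\hat A)$, giving precompactness for a dense set of initial data, then extend to all $e_0\in H$ by the contraction property and a standard $\varepsilon/3$ density argument. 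This yields precompactness of every positive semitrajectory of \eqref{error} in $H$.
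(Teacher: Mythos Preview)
Your proposal is correct and follows essentially the same route as the paper: estimate the Hilbert--Schmidt norm of $Res(\hat A)$ using the explicit block formulas and Assumption~\ref{assumption_cnv}, conclude that the resolvent is compact, and then deduce precompactness of orbits. The paper carries out the Hilbert--Schmidt estimate in one stroke via the bound~\eqref{erres_est} and then simply invokes the Dafermos--Slemrod result~\cite{DS1973}, whereas you treat the four blocks separately and, in place of the citation, sketch the underlying mechanism (contraction semigroup from $m$-dissipativity of $\hat A$, precompactness first for $e_0\in D(\hat A)$ via $e^{t\hat A}e_0 = Res(\hat A)\,Res(\hat A)^{-1}e^{t\hat A}e_0$, then density). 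Both arguments are equivalent in substance; yours is more self-contained, the paper's more concise.
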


\begin{proof}
	If $\lambda>0$ is small enough and Assumption~\ref{assumption_cnv} is fulfilled, then formulas~\eqref{erres_sol} define  the resolvent of $\hat A$ as an operator that maps the vector $(\bar\Delta,\bar\delta)^T\in H$ to ${(\Delta,\delta)^T\in H}$.
	
	Let us prove its compactness.
	For this purpose we consider the Hilbert--Schmidt norm of the operator ${(a_{ij})=Res(\hat A)}$,
	$$\|a_{ij}\|=\sqrt{\sum\limits_{i=1}^\infty\sum\limits_{j=1}^\infty(a_{ij})^2}.$$
	After performing several estimates based on the Cauchy--Schwarz inequality, we conclude that $\|Res(\hat A)\|^2$
	does not exceed the following value:
	\begin{equation}\label{erres_est}
		2\sum\limits_{j,i=1}^\infty\frac1{\omega_j^2}\left(\frac1{\omega_i^2}\left(\sum\limits_{s,p=1}^r\gamma_sM^{-1}_{sp}c_{sj}c_{pi}\right)^2+2\right).
	\end{equation}
As $|M^{-1}_{sp}|\leq\|M^{-1}\|$, the estimate~\eqref{erres_est} of $\|Res\hat A\|^2$ is finite, while $c_{sj}$ and $\omega_j$ satisfy Assumption~\ref{assumption_cnv} for every $s=\overline{1,r}$, $j=1,2,\dots$ .
As the Hilbert--Schmidt norm of the resolvent is finite, $Res(\hat A):H\to H$ is a compact operator.
Because of the Dafermos--Slemrod (see~\cite{DS1973}), the positive trajectories of system~\eqref{error} are precompact in $H$
as the resolvent ${Res(\hat A)=(\hat A-\lambda I)^{-1}}$ is compact for small enough $\lambda>0$.
\end{proof}

\section{ASYMPTOTIC STABILITY}\label{sec_stability}
\begin{assumption}\label{assumption3}
	The only invariant subspace of ${\rm Ker}\,C$ under the action of the semigroup $\{e^{tA}\}_{t\geq0}$ is the singleton $\{0\}$.
\end{assumption}

\begin{assumption}\label{assumption4}
	For each $j\in{\mathbb N}$,
	there exists an $s=\overline{1,r}$ such that
	\begin{equation}\label{outputmatr}
		c_{sj}\neq0.
	\end{equation}
\end{assumption}

\begin{theorem}\label{th_observer_linsys}
	Let Assumptions~\ref{assumption1}--\ref{assumption4} be satisfied, and
	let the components of the operator $F$ be defined by~\eqref{obsmatr}.
	Then the trivial solution $e=0$ of~\eqref{error} is asympto\-tically stable.
\end{theorem}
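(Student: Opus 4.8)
The plan is to establish asymptotic stability of $e=0$ for \eqref{error} by combining a quadratic Lyapunov functional with the infinite-dimensional LaSalle invariance principle, the precompactness of orbits being supplied by Lemma~2. As a preliminary step I would note that $C$ and $F$ are bounded operators on $H$: the rows of $C_1$ and, by \eqref{obsmatr}, the columns of $f$ belong to $\ltwo$, so both $C$ and $F$ are in fact Hilbert--Schmidt. Hence $\hat A=A-FC$ is a bounded perturbation of the generator of Lemma~\ref{le_generator} and therefore itself generates a $C_0$-semigroup $\{e^{t\hat A}\}_{t\ge0}$ on $H$ (and $A$ generates a $C_0$-group, since $-A$ is $m$-dissipative too, a fact I would use only for convenience). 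Thus \eqref{error} is well posed and it suffices to prove $\|e(t)\|\to0$ along every orbit.

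Next I would introduce the energy $V(e)=\tfrac12\|e\|^2$. Along a classical solution of \eqref{error} (initial datum in $D(A)=D(\hat A)$), using \eqref{dissip} one gets $\dot V=\langle\hat Ae,e\rangle_H=\langle Ae,e\rangle_H-\langle FCe,e\rangle_H=-\langle FCe,e\rangle_H$, and a short computation from the definitions of $C$, $F$ and \eqref{obsmatr} gives
\begin{equation*}
\langle FCe,e\rangle_H=\sum_{s=1}^r\gamma_s\,(C_1\Delta)_s^2\ge0 .
\end{equation*}
Hence $V$ is non-increasing, so $\|e(t)\|\le\|e(0)\|$ for all $t\ge0$; this extends to mild solutions by density and already yields Lyapunov stability of $e=0$.

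For attractivity I would apply LaSalle's principle in the form valid for precompact orbits (Dafermos--Slemrod; precompactness is Lemma~2): every orbit of \eqref{error} satisfies $\operatorname{dist}(e(t),\mathcal M)\to0$ as $t\to\infty$, where $\mathcal M$ is the largest forward-invariant subset of $\mathcal E:=\{e\in H:\langle FCe,e\rangle_H=0\}$. Since $\gamma_s>0$, we have $\mathcal E=\{e:C_1\Delta=0\}=\operatorname{Ker}C$. It remains to show $\mathcal M=\{0\}$. Let $p\in\mathcal M$ and $q(t):=e^{t\hat A}p$; then $q(t)\in\mathcal M\subseteq\operatorname{Ker}C$ for all $t\ge0$, so $FCq(t)\equiv0$, and the variation-of-constants formula relating $e^{t\hat A}$ to $e^{tA}$ gives $q(t)=e^{tA}p$ for all $t\ge0$. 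Therefore $W:=\overline{\operatorname{span}\{e^{tA}p:t\ge0\}}$ is a closed subspace of $\operatorname{Ker}C$ invariant under $\{e^{tA}\}_{t\ge0}$, so Assumption~\ref{assumption3} forces $W=\{0\}$ and hence $p=0$. Thus $\mathcal M=\{0\}$, i.e.\ $\|e(t)\|\to0$ and \eqref{conv} holds; combined with the previous step this is the claimed asymptotic stability.

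The step I expect to be the main obstacle is the implication ``the orbit stays in $\operatorname{Ker}C$ $\Rightarrow$ the orbit is trivial'', which is exactly the content of Assumption~\ref{assumption3} and must be shown to follow from the concrete Assumptions~\ref{assumption1} and~\ref{assumption4}. To this end I would use the diagonal structure of $A$: the $j$-th mode of $e^{tA}p$ rotates with frequency $\omega_j$, i.e.\ $\Delta_j(t)=\Delta_j^0\cos\omega_jt+\delta_j^0\sin\omega_jt$, so $C_1\Delta(t)\equiv0$ reads $\sum_j c_{sj}\bigl(\Delta_j^0\cos\omega_jt+\delta_j^0\sin\omega_jt\bigr)=0$ for all $t$ and all $s=\overline{1,r}$. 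By Assumption~\ref{assumption1} the $\omega_j$ are pairwise distinct, so a term-by-term identification of this generalized trigonometric series forces $c_{sj}\Delta_j^0=c_{sj}\delta_j^0=0$ for all $j,s$, and Assumption~\ref{assumption4} then gives $\Delta_j^0=\delta_j^0=0$, i.e.\ $p=0$. Justifying this term-by-term identification in the infinite sum --- e.g.\ via the uniform, almost-periodic convergence made possible by the decay in Assumption~\ref{assumption_cnv}, or by testing against $e^{\pm i\omega_m t}$ and averaging --- is the delicate analytic point, and it is where the real work of the proof lies.
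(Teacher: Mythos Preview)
Your argument is correct and is essentially the paper's own proof: the same quadratic Lyapunov functional (the paper writes $\mathcal W(e)=\|e\|^2=2V(e)$), the same computation $\dot V=-\sum_s\gamma_s(C_1\Delta)_s^2\le0$, Lyapunov stability, then LaSalle's invariance principle with precompactness from Lemma~2, identification of the zero-dissipation set with $\operatorname{Ker}C$, and Assumption~\ref{assumption3} to conclude that the largest invariant set is $\{0\}$. Your variation-of-constants reduction from $e^{t\hat A}$ to $e^{tA}$ on the invariant set and the explicit construction of the invariant subspace $W$ are a bit more detailed than the paper's one-line appeal to Assumption~\ref{assumption3}, but the structure is identical.

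One correction, though: your final paragraph misreads the hypotheses. Theorem~\ref{th_observer_linsys} assumes Assumptions~\ref{assumption1}--\ref{assumption4}, so Assumption~\ref{assumption3} is a \emph{hypothesis}, not something to be derived from Assumptions~\ref{assumption1} and~\ref{assumption4}. Your proof is already complete at the end of the third paragraph; there is no remaining ``main obstacle''. The extra argument you sketch --- using the distinctness of the $\omega_j$ together with Assumption~\ref{assumption4} to force all modal coefficients to vanish from $\sum_j c_{sj}(\Delta_j^0\cos\omega_jt+\delta_j^0\sin\omega_jt)\equiv0$ --- is exactly the content of the paper's \emph{Theorem~2}, where the abstract Assumption~\ref{assumption3} is replaced by the concrete beam spectrum and the term-by-term identification is justified via the linear independence of $\{e^{\pm i\omega_jt}\}$ in $L^2[0,\tau]$ (Lemma~\ref{prop}), rather than by almost-periodic averaging.
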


\begin{proof}
	Consider the following positive definite quadratic form on $H$:
	$${\cal W}(e)=\sum\limits_{j=1}^\infty(\Delta_j^2+\delta_j^2),$$
	and calculate its time derivative along the trajectories of system~\eqref{error}:
	$$\dot{\cal W}(e)=-2\sum\limits_{j=1}^\infty\sum\limits_{i=1}^\infty\sum\limits_{s=1}^r c_{si}\,\Delta_i\left(f_{js}\Delta_j+g_{js}\delta_j\right).$$
	If $f_{js}$ and $g_{js}$ are defined by~\eqref{obsmatr}, then
	$$\dot{\cal W}(e)=-2\sum\limits_{s=1}^r\gamma_s\left(\sum\limits_{j=1}^\infty c_{sj}\Delta_j\right)^2\leq0\;\;\text{for all}\;
	\left(
	\begin{array}{c}
		\Delta \\
		\delta \\
	\end{array}
	\right)\in D(A).$$
	According to Lyapunov's theorem, the solution $e(t)\equiv0$ of~\eqref{error} is stable.
	
	In order to prove its asymptotic stability, consider the set $S=\{e(t):\dot{\cal W}(e)\equiv0\}$, where $e(t)$ is the solution of~\eqref{error}. From the equality
	\begin{equation}\label{ker}
		Ce(t)=0
	\end{equation}
	and the structure of $S$, we deduce that $S={\rm Ker}\,C$.
	
	Assumption~\ref{assumption3} now appears as the condition of   LaSalle's theorem~\cite{L1976} (see also~\cite{Z2003,Z2006} for the semigroup formulation), according to which the trivial solution of the error dynamics~\eqref{error} is asymptotically stable.
\end{proof}
The applicability of Assumption~3 will be checked under a specific choice of the parameters appearing in $A$ and $C$ for a particular class of flexible structures.

\section{FLEXIBLE BEAM VIBRATIONS}\label{sec_beam}
System~\eqref{linsys} represents the operator form of a wide class of mathematical models of controlled flexible structures.
In the sequel, we will consider a particular case of system~\eqref{linsys} resulting from the modal analysis of a vibrating flexible beam with an attached point mass.
 A complete description of this plant can be found in~\cite{KZ2021}.
The equation of motion of this system has been derived by using Hamilton's principle, which yields the following relation:
\begin{equation}\label{VF}
	\begin{aligned}
		\int_0^l \left(\rho \ddot w \delta w+ \left(EI w'' -\sum_{i=1}^ku_i\psi_i\right)\delta w''\right)dx&\\
		+\left.\left(m\ddot w+\varkappa w - u_0\right)\delta w\right|_{x=l_0}&=0,
	\end{aligned}
	\end{equation}
which is assumed to hold for each admissible variation $\delta w(x,t)$ of class $C^2\left([0,l]\times [0,\tau]\right)$, $\tau>0$, satisfying the boundary conditions
$$\delta w|_{t=0}=\delta w|_{t=\tau}=\delta w|_{x=0}=\delta w|_{x=l}=0.$$
The function $w(x,t)\in C^2\left([0,l]\times [0,\tau]\right)$ represent the transversal deflection of the beam at a point $x$ and time $t$,
$w(0,t)=w(l,t)=0$; $\rho$, $E$, $I$, $m$, $\varkappa$, and $l$ are positive mechanical parameters, $l_0\in(0,l)$ is the point of the rigid body attachment, piecewise continuous functions $\psi_i(x)$ encode the actuators placement, ${{\rm supp}\,\psi_i\cap\{0,l_0,l\}=\emptyset}$, $i=\overline{1,k}$.

It has been shown in~\cite{KZB2021} that the parameters $\omega_j$, appearing in the operator $A$ in~\eqref{linsys}, are determined in terms of eigenvalues of the spectral problem
\begin{equation}\label{spectral}
	\begin{gathered}
		\frac{d^4}{dx^4}W(x) = \omega^2 \frac{\rho}{EI}W(x),\quad x\in (0,l)\setminus \{l_0\},\\
		W(0)=W(l)=0,\; W''(0)=W''(l)=0,\;W\in C^2[0,l],\\
		W'''(l_0-0)-W'''(l_0+0)=\frac{\varkappa-\omega^2 m}{EI}W(l_0).
	\end{gathered}
\end{equation}
Equations~\eqref{spectral} are derived by separation of variables in the homogeneous part of~\eqref{VF}.

Precisely, the eigenfrequencies ($\omega_j^2$ in the current notation) can be calculated as solutions of the following characteristic equation:
\begin{equation}\label{chareq}
	\Delta(\omega) = 0,
\end{equation}
where
\begin{equation*}
	\begin{aligned}
		\Delta(\omega) &= \frac{m}{4 \mu \rho} \left\{ (\cosh \mu (l-2 l_0) - \cosh \mu l ) \sin \mu l \right.\\
		&\left.+ (\cos \mu(l-2 l_0)-\cos \mu l ) \sinh \mu l \right\}
		- \frac{\sin \mu l \sinh \mu l}{\mu^2}\\
		&+\frac{\varkappa}{4 EI \mu^5}\left\{
		(\cosh \mu l -\cosh\mu(l-2l_0))\sin \mu l \right. \\
		&\left.- (\cos \mu (l-2 l_0)+\cos \mu l) \sinh \mu l \right\}.
	\end{aligned}
\end{equation*}

It has been proved in~\cite{ZK2021} and~\cite{KZB2021} that $\omega_j$ form an increasing sequence of distinct positive real numbers, so Assumption~\ref{assumption1} is fulfilled. Moreover (see~\cite{KZ2021}), the eigenvalues $\omega_j^2$ of~\eqref{spectral} satisfy the following growth condition: let $Q[a,b)$ denote the number of terms of the sequence $\{\omega_j\}_{j\in \mathbb N}$ in the interval $[a,b)$, then
\begin{equation*}
	\underset{y\to\infty}{\lim\sup}\;\underset{\tilde y\to\infty}{\lim\sup}\;\frac{Q[y,y+\tilde y)}{\tilde y}=0.
\end{equation*}

\begin{lemma}\label{prop}
	Let $\omega_j$ be the solutions of equation~\eqref{chareq}, ${j=1,2,\dots}$, then the system of exponents $\{e^{\pm i\omega_jt}\}_{j=1}^\infty$ is linearly independent on $L^2[0,\tau]$ for any $\tau>0$.
\end{lemma}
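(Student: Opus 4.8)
The plan is to reduce the statement to the classical fact that finitely many exponential functions with pairwise distinct exponents are linearly independent on any interval of positive length, and then to invoke that fact for each finite subfamily, since linear independence of a countable system is by definition linear independence of each of its finite subsystems.

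First I would record the structural input coming from the earlier sections: by Assumption~\ref{assumption1} together with the results of \cite{ZK2021} and \cite{KZB2021}, the numbers $\omega_j$ are pairwise distinct and strictly positive. Consequently the doubly-indexed collection of exponents $\{\,i\omega_j,\ -i\omega_j : j\in\mathbb N\,\}$ consists of pairwise distinct (purely imaginary) complex numbers: indeed $i\omega_j\ne i\omega_{j'}$ and $-i\omega_j\ne -i\omega_{j'}$ for $j\ne j'$ because the $\omega_j$ are distinct, while $i\omega_j\ne -i\omega_{j'}$ for all $j,j'$ because $\omega_j+\omega_{j'}>0$. This is the only place where the specific characteristic equation~\eqref{chareq} enters.

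Next I would prove the finite statement: if $\lambda_1,\dots,\lambda_N$ are pairwise distinct real numbers and $c_1,\dots,c_N\in\mathbb C$ satisfy $\sum_{k=1}^N c_k e^{i\lambda_k t}=0$ for almost every $t\in[0,\tau]$, then $c_1=\dots=c_N=0$. The function $f(t)=\sum_{k=1}^N c_k e^{i\lambda_k t}$ is the restriction to $\mathbb R$ of an entire function, hence real-analytic; being continuous and vanishing almost everywhere on $[0,\tau]$, it vanishes identically on $[0,\tau]$ and therefore, by the identity theorem, on all of $\mathbb R$. Differentiating $m$ times at $t=0$ yields $\sum_{k=1}^N c_k(i\lambda_k)^m=0$ for every $m=0,1,\dots,N-1$; since the matrix $\big((i\lambda_k)^m\big)_{0\le m\le N-1,\ 1\le k\le N}$ is a Vandermonde matrix built from the distinct numbers $i\lambda_1,\dots,i\lambda_N$, it is invertible, whence $c_k=0$ for all $k$. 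Equivalently, one may observe that the Wronskian of $e^{i\lambda_1 t},\dots,e^{i\lambda_N t}$ equals $\big(\prod_{k<l}(i\lambda_l-i\lambda_k)\big)\prod_k e^{i\lambda_k t}\ne 0$, which by itself precludes a nontrivial linear dependence on any interval, no matter how short.

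Applying this to an arbitrary finite subcollection of $\{e^{\pm i\omega_j t}\}_{j=1}^\infty$, whose exponents are pairwise distinct by the first step, gives the asserted linear independence in $L^2[0,\tau]$ for every $\tau>0$. I do not expect a real obstacle here: once distinctness and positivity of the roots of~\eqref{chareq} are imported from the cited works, the remainder is the standard analyticity/Vandermonde argument. I would only add the remark that strengthening ``linearly independent'' to ``minimal'', ``$\omega$-independent'', or ``Riesz basis'' would genuinely require the density/growth condition on $\{\omega_j\}$ stated after~\eqref{chareq}, but that is not needed for the present statement.
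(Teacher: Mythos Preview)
Your proof is correct. The paper takes a different, much terser route: it simply records that the assertion follows directly from Theorem~1.2.17 of Krabs' monograph~\cite{K1992}, a general result on exponential systems that applies once the eigenvalue distribution established in~\cite{KZ2021},~\cite{KZB2021} is available. Your argument, by contrast, is elementary and fully self-contained: you reduce to finite subfamilies (which is the definition of linear independence for a countable system) and dispatch each one via the analyticity/Vandermonde argument, using only that the $\omega_j$ are distinct and positive. What your approach buys is transparency and independence from the external reference; what the citation to Krabs potentially buys is a stronger conclusion such as minimality or $\omega$-independence, which---as you rightly note in your final remark---would genuinely require the growth condition recorded after~\eqref{chareq} and is in fact what the paper implicitly uses when it deduces from~\eqref{err_sol1} that all coefficients in an \emph{infinite} expansion vanish.
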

The assertion of this lemma follows directly from~\cite[Theorem~1.2.17]{K1992}.

The eigenfunctions $W_j(x)$, $j\in{\mathbb N}$ of the corresponding spectral problem form a linearly independent orthogonal system with respect to the inner product $$\left<W_i,W_j\right>_{\tilde H}=\int_0^l \rho W_i(x)W_j(x)\,dx + m W_i(l_0) W_j(l_0).$$

Let us take the eigenfunctions $W_1(x),W_2(x),\dots$, corresponding to eigenvalues $\omega_1^2,\omega_2^2,\dots$, and consider the linear mani\-fold ${\cal S}={\rm span}\{W_1(x),W_2(x),\dots\}$. Take $w$ and $\delta w$ from ${\cal S}$, namely, $w(x,t)=\sum\limits_{i=1}^\infty q_iW_i(x)$ and $\delta w(x,t)=W_j(x)$, and substitute them into~\eqref{VF}. Here $q_i$ are coefficients of the linear combination ($q_i$ depend on $t$). After performing the integration by parts and taking into account the orthogonality of the eigenfunctions, we obtain the following infinite system of ordinary differential equations:
\begin{equation}\label{prj}
	\ddot q_j + \omega_j^2 q_j = \frac{W_j(l_0)}{\|W_j\|^2_{\tilde H}}u_0+\sum_{i=1}^k \frac{\int_0^l\psi_i(x) W_j''(x)\,dx}{\|W_j\|^2_{\tilde H}}u_i,\; j=1,2,\dots \;.
\end{equation}
The system~\eqref{prj} in this interpretation represents the orthogonal projection of the beam-body equations of motion onto the infinite-dimensional linear manifold ${\cal S}$.
In this outline, we treat data $c_{sj}$ as parameters of the output signals provided by sensors located at points $x=l_s$ of the beam,
$$c_{1j} = W_j(l_0),\quad\text{and}\quad c_{sj}=W_j''(l_{s-1}),\quad s=\overline{2,r},$$
the control $u_0$ is treated as the  force applied to the rigid body at $x=l_0$, and $u_1,\dots,u_k$ are treated as actions supplied by $k$  piezoelectric actuators,
$$b_{j0}=\frac{W_j(l_0)}{\|W_j\|^2_{\tilde H}},\;\; b_{jp}=\frac{\int_0^l\psi_p(x) W_j''(x)\,dx}{\|W_j\|^2_{\tilde H}},\; p=1,2,\dots,k.$$

Then we denote ${\xi_j=\omega_jq_j}$ and ${\eta_j=\dot q_j}$ respectively, $j\in\mathbb{N}$, and assume that ${\xi=(\xi_1,\xi_2,\dots)^T\in \ltwo}$, ${\eta=(\eta_1,\eta_2,\dots)^T\in \ltwo}$, which leads  to the system of first-order ordinary differential equations written in the abstract form as~\eqref{linsys}--\eqref{output}.

Now the assertion of Theorem~\ref{th_observer_linsys} can be adjusted for the considered flexible system.

\begin{theorem}
	Assume that the components $\omega_1,\omega_2,\dots$ of the operator $A$ in~\eqref{linsys} are the solutions of equation~\eqref{chareq},
and the operator $F$ in~\eqref{error} is defined by~\eqref{obsmatr} with arbitrary positive gain parameters $\gamma_s$, $s=\overline{1,r}$.
 If Assumption~\ref{assumption4} is satisfied, then the trivial solution $e=0$ of system~\eqref{error} is asymptotically stable.
\end{theorem}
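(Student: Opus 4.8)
The plan is to obtain the statement from Theorem~\ref{th_observer_linsys} by checking that, in the beam case, the hypotheses Assumptions~\ref{assumption1}--\ref{assumption3} hold automatically (Assumption~\ref{assumption4} is imposed directly, and $F$ has the same form~\eqref{obsmatr}). Assumption~\ref{assumption1} has in fact already been recorded above: the spectral analysis of~\eqref{spectral}--\eqref{chareq} carried out in~\cite{ZK2021},~\cite{KZB2021} shows that the roots $\omega_j$ of $\Delta(\omega)=0$ are simple, real, positive and strictly increasing.

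For Assumption~\ref{assumption_cnv} I would invoke the growth condition recalled after~\eqref{chareq}. If $N(x)$ denotes the counting function $Q[0,x)$, then $\limsup_{y\to\infty}\limsup_{\tilde y\to\infty}Q[y,y+\tilde y)/\tilde y=0$ forces $N(x)/x\to0$ as $x\to\infty$ (fix $\varepsilon>0$, choose $y$ past the outer $\limsup$ and then $\tilde y$ past the inner one, and let $x=y+\tilde y\to\infty$). Swapping sum and integral, $\sum_j\omega_j^{-2}=\int_0^\infty 2x^{-3}N(x)\,dx$, and since $N$ vanishes on $[0,\omega_1)$ while $N(x)/x$ is bounded and tends to $0$, this integral — hence the series — is finite. (Alternatively one may simply use the asymptotics $\omega_j=\Theta(j^2)$ extractable from~\eqref{chareq}, which gives $\sum_j\omega_j^{-2}<\infty$ comfortably.) Thus the precompactness lemma applies.

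The substantive step is Assumption~\ref{assumption3}. Suppose $V\subseteq{\rm Ker}\,C$ is invariant under $\{e^{tA}\}_{t\ge0}$ and let $z_0=(\xi^0,\eta^0)^T\in V$. Since $A$ acts mode-wise — $\dot\xi_j=\omega_j\eta_j$, $\dot\eta_j=-\omega_j\xi_j$, i.e. $\xi_j(t)+i\eta_j(t)=e^{-i\omega_jt}(\xi_j^0+i\eta_j^0)$ (directly for $z_0\in D(A)$, and for general $z_0\in H$ by density and boundedness of $e^{tA}$) — the $s$-th component of $Ce^{tA}z_0$ is
$$
h_s(t)=\sum_{j=1}^\infty c_{sj}\bigl(\xi_j^0\cos\omega_jt+\eta_j^0\sin\omega_jt\bigr),
$$
and $\sum_j|c_{sj}|\,|\xi_j^0+i\eta_j^0|\le\|(c_{s1},c_{s2},\dots)\|_{\ell^2}\,\|z_0\|_H<\infty$ by the Cauchy--Schwarz inequality, so the series converges uniformly in $t$; thus $h_s$ is a Bohr almost periodic function whose exponents lie among the pairwise distinct numbers $\{\pm\omega_j\}$ (here Assumption~\ref{assumption1} is used). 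Invariance of $V$ inside ${\rm Ker}\,C$ gives $h_s\equiv0$ for all $s$, so every mean-value (Bohr--Fourier) coefficient vanishes; computing the one at frequency $\omega_k$,
$$
0=\lim_{T\to\infty}\frac1T\int_0^Th_s(t)e^{i\omega_kt}\,dt=\frac12\,c_{sk}\bigl(\xi_k^0+i\eta_k^0\bigr),
$$
where interchanging limit and sum is legitimate by the uniform convergence together with the distinctness of the $\omega_j$ (a fact closely related to the linear independence of exponentials recorded in Lemma~\ref{prop}). Hence $c_{sk}(\xi_k^0+i\eta_k^0)=0$ for all $k$ and $s$, and Assumption~\ref{assumption4} — for each $k$ there is an $s$ with $c_{sk}\neq0$ — yields $\xi_k^0=\eta_k^0=0$ for every $k$, i.e. $z_0=0$. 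Therefore $V=\{0\}$ and Assumption~\ref{assumption3} holds.

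With Assumptions~\ref{assumption1}--\ref{assumption4} all verified, Theorem~\ref{th_observer_linsys} applies word for word and yields the asymptotic stability of $e=0$ for~\eqref{error}. The main obstacle is precisely the passage from ``$h_s\equiv0$'' to the vanishing of each individual coefficient: one is dealing with a genuinely infinite superposition of exponentials, for which the finite-dimensional independence of Lemma~\ref{prop} does not by itself suffice and must be supplemented by the absolute-summability estimate above together with the mean-value characterization of Bohr--Fourier coefficients (equivalently, by the almost periodicity of $t\mapsto Ce^{tA}z_0$); the verification of Assumptions~\ref{assumption1} and~\ref{assumption_cnv} is comparatively routine given the cited spectral facts.
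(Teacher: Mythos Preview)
Your proof is correct and follows the paper's overall architecture: verify Assumptions~\ref{assumption1}--\ref{assumption3} for the beam model and then invoke Theorem~\ref{th_observer_linsys}. The paper's own proof does exactly this, so the strategies coincide.

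Two differences are worth recording. First, you verify Assumption~\ref{assumption_cnv} explicitly via the counting-function estimate (or the $\omega_j=\Theta(j^2)$ asymptotics), whereas the paper's proof of this theorem does not revisit that hypothesis and simply concludes that ``the conditions of Theorem~1 are satisfied''. Second, and more substantively, to pass from the identity~\eqref{err_sol1} to the vanishing of every individual coefficient the paper appeals directly to Lemma~\ref{prop} (linear independence of $\{e^{\pm i\omega_j t}\}$ in $L^2[0,\tau]$, via~\cite{K1992}); you instead establish absolute summability of the series by Cauchy--Schwarz, recognise $h_s$ as Bohr almost periodic, and read off each coefficient from the mean-value formula. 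Your closing remark that finite linear independence does not by itself control a genuinely infinite superposition is well taken: the paper is terse at precisely this point, and your Bohr--Fourier argument makes the step self-contained without requiring the reader to unpack what Krabs' theorem actually delivers. Either route closes the argument, but yours is the more transparent of the two.
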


\begin{proof}
	Instead of step-by-step proof, we will just underline the key difference of this case from Theorem~\ref{th_observer_linsys}.
%Here we do not have at our disposal the requirement on ${\rm Ker}\,C$ to have $\{e(t)\equiv0\}$ as the only invariant subspace.
	Note that system~\eqref{error} degrades into
	\begin{equation}\label{err}
		\dot e(t)=Ae(t)
	\end{equation}
	on ${\rm Ker}\,C$.
	
	Suppose initial the conditions
	\begin{equation}\label{IC}
		e(0)=e_0\in H.
	\end{equation}
	As a consequence of Lemma~\ref{le_generator}, the Cauchy problem~\eqref{err},~\eqref{IC} is well-posed on $H$.
	The general solution of~\eqref{err},~\eqref{IC} can be written as
	\begin{equation}\label{err_sol}
		e(t)=\exp(tA)e_0.
	\end{equation}
	The substitution of~\eqref{err_sol} into~\eqref{ker} leads to
	$		C\exp(tA)e_0\equiv0$ or, equivalently,
	\begin{equation}\label{err_sol1}
		\sum\limits_{j=1}^\infty c_{sj}\left(\xi_j(0)\cos\omega_jt+\eta_j(0)\sin\omega_jt\right)=0,\quad s=\overline{1,r}.
	\end{equation}
	
	As mentioned in Lemma~\ref{prop}, the system of functions $\{\cos\omega_jt,\sin\omega_jt\}_{j=1}^\infty$ is linearly independent on  $L^2[0,\tau]$ for any $\tau>0$, since $\omega_j$ satisfy~\eqref{chareq}. Taking into account condition~\eqref{outputmatr}, the property~\eqref{err_sol1} holds only for ${\xi_j(0)=0}$, ${\eta_j(0)=0}$, ${\forall\; j=1,2,\dots}$, i.e. for $e_0\equiv0$. That is, the only solution of system~\eqref{error} on ${\rm Ker}\,C$ is the trivial one. So, the only invariant subset of ${\rm Ker}\,C$ under the action of the semigroup $\{e^{tA}\}_{t\geq0}$ is the point $e=0$.
	
	Thus, we see that Assumption~\ref{assumption3} is guaranteed by the asymptotic distribution of the eigenvalues. So, the conditions of Theorem~1 are satisfied that leads us to the conclusion about the asymptotic stability of the trivial equilibrium of~\eqref{error}.
\end{proof}

\section{OBSERVATION ERROR CONVERGENCE}\label{sec_simulations}
In order to illustrate the observation error dynamics,
we perform the integration of truncated systems~\eqref{linsys}--\eqref{output},~\eqref{observer}, and~\eqref{error} with the coordinate indices $j=1,2,...,N$, where $N$ is a given natural number.
Let us denote by $e_N(t)$ the solution of the corresponding finite-dimensional system~\eqref{error} with $j=1,\dots,N$,
and its Euclidean norm by
$$\|e_N(t)\|=\sqrt{\sum\limits_{j=1}^N(\Delta_j^2+\delta_j^2)}.
$$

We present the results of numerical simulations for the flexible beam model of length ${l=1.875\,\text{m}}$ with the rigid body attached at the point ${l_0=1.378\,\text{m}}$. This choice of mechanical parameters is described in~\cite{KZB2021}. The outputs are assumed to be provided by four piezoelectric sensors located at ${l_1=0.075\,\text{m}}$, ${l_2=0.716\,\text{m}}$, ${l_3=1.128\,\text{m}}$, and ${l_4=1.555\,\text{m}}$. The numerical integration has been carried out in Maple for the truncated error dynamics with ${N=6}$, ${N=16}$, and ${N=40}$ modes of vibration. The initial conditions are taken   as ${\Delta_j(0)=\delta_j(0)=\frac1{j\,\omega_j}}$.

Figure~1 depicts the first six modal deflections $\Delta_i(t)$ with the gain parameters $\gamma_i=6$, $i=1,\dots,6$.
Figures~2 and~3 depict, respectively, $\|e_{16}(t)\|^2$ and $\|e_{40}(t)\|^2$ with the observer gain parameters $\gamma_i=0.8$, $\gamma_i=6$, and $\gamma_i=12$, ${i=0,\dots,4}$.

\begin{figure}[thpb]
	\centering
	%\parbox{3in}{
		\includegraphics[scale=0.3]{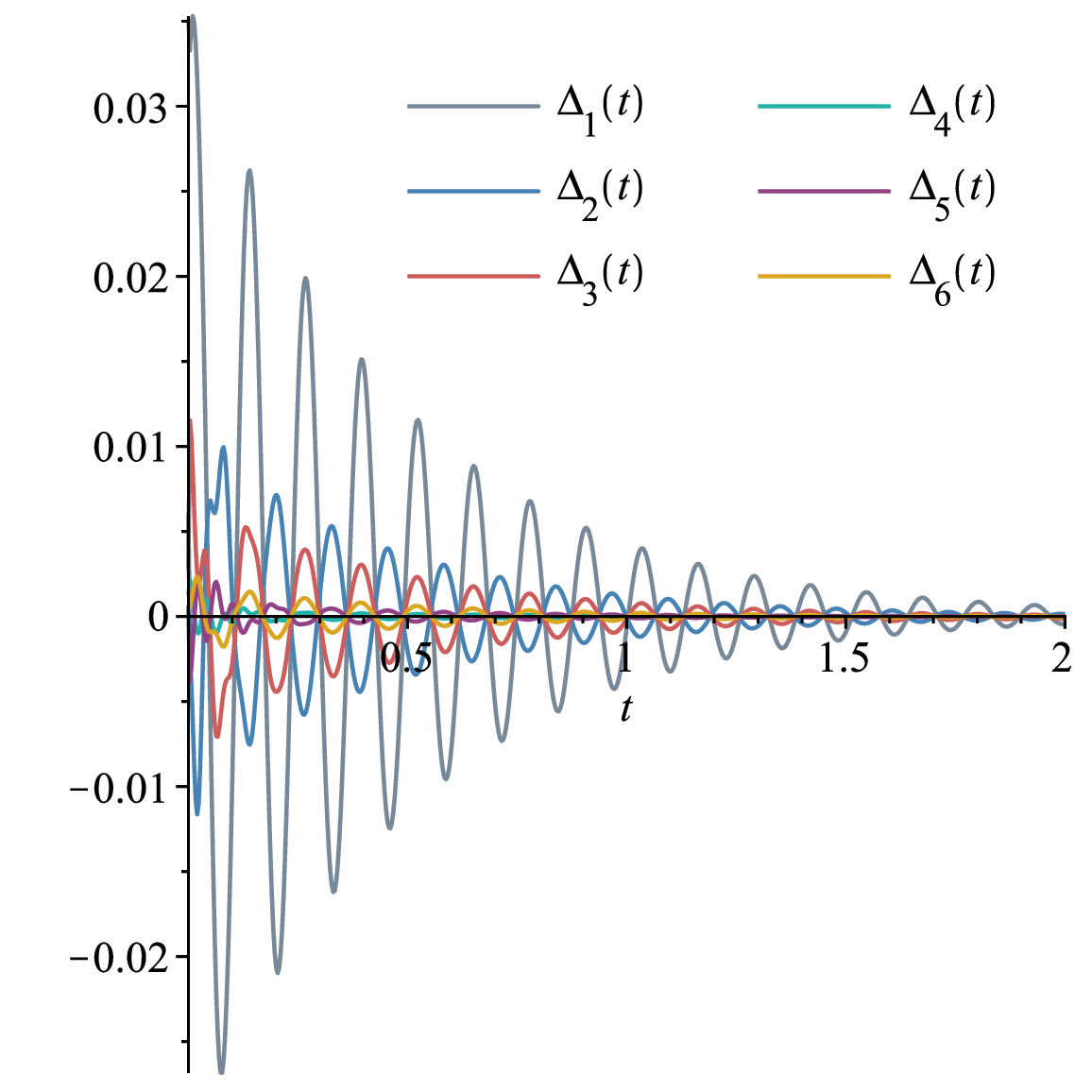}%\vspace{-1ex}
		\caption{The time plots of $\Delta_1(t),\dots,\Delta_6(t)$}%}
	\label{f_delta1N06g6}
\end{figure}

\begin{figure}[thpb]
	\centering
	%\parbox{3in}{
		\includegraphics[scale=0.3]{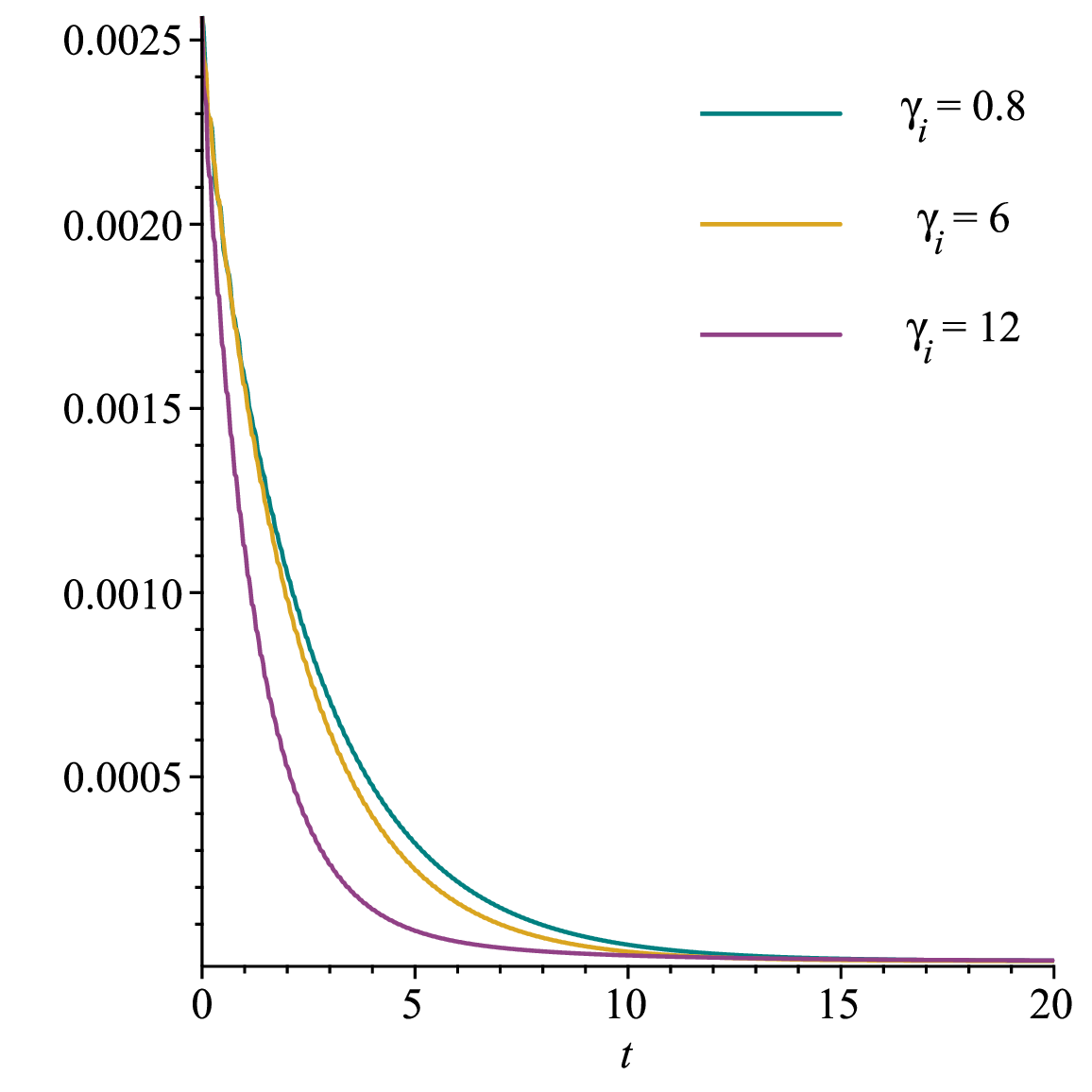}%\vspace{-1ex}
		\caption{The time plot of $\|e_{16}(t)\|^2$}%}
	\label{f_enormN16}
\end{figure}

\begin{figure}[thpb]
	\centering
	%\parbox{3in}{
		\includegraphics[scale=0.3]{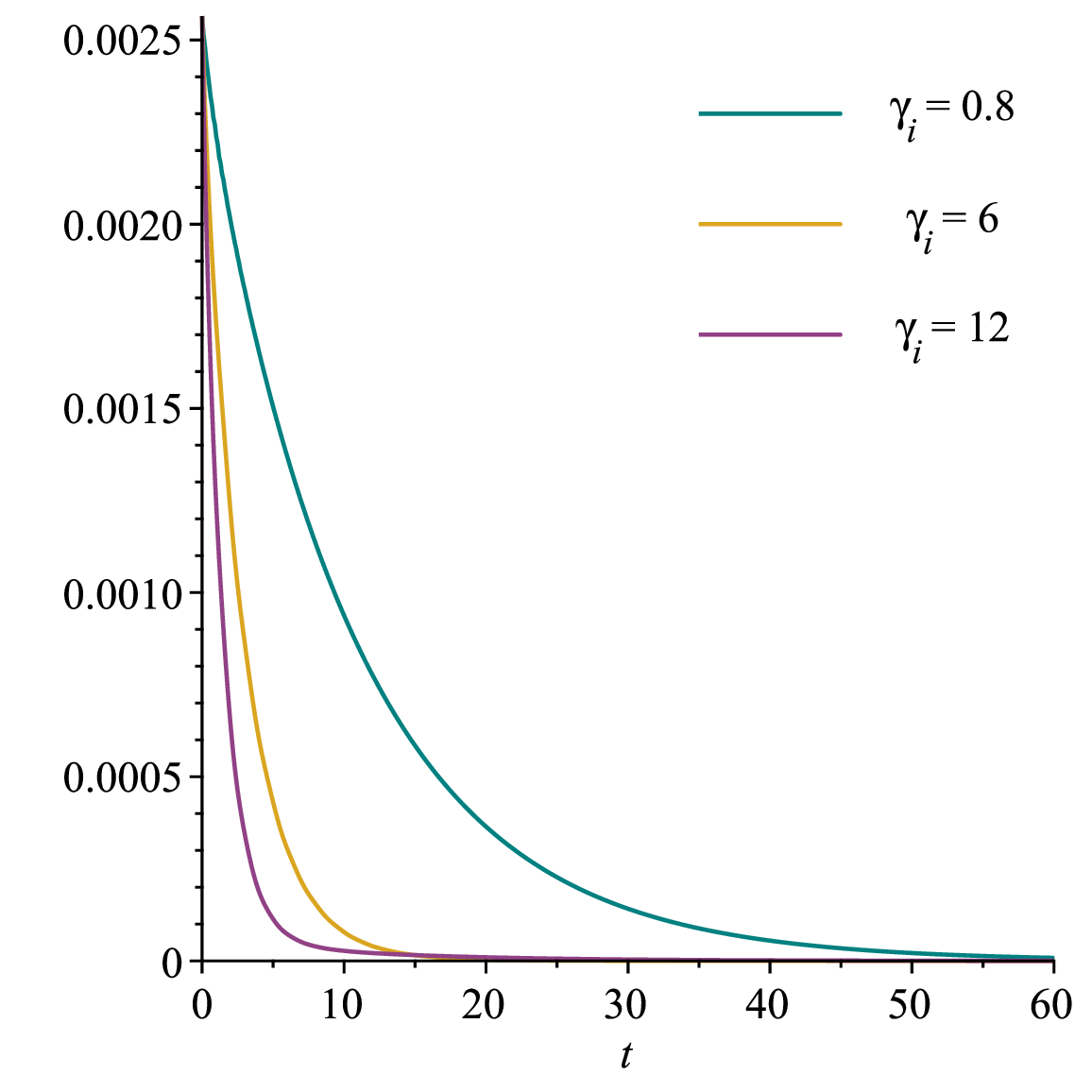}%\vspace{-1ex}
		\caption{The time plot of $\|e_{40}(t)\|^2$}%}
	\label{f_enormN40}
\end{figure}

These simulation results illustrate the exponential convergence of the finite-dimensional error dynamics.
We also observe that solutions of the system of larger dimension ($N=40$) decay slower than the solutions with $N=16$ for large values of $t$.

\section{CONCLUSIONS}

We have considered infinite-dimensional Hamiltonian control system~\eqref{linsys}--\eqref{output} and derived the Luenberger-type observer in the form~\eqref{observer}
that allows to asymptotically reconstruct  the full state of the original system by using a finite number of outputs.
The observer design problem has been solved relying on Lyapunov's direct method and the invariance principle,
where the energy-induced Lyapunov functional is taken to measure the observation error.
The proposed observer takes into account the input action and thus admits arbitrarily large inputs. It also permits unbounded outputs as functions of time.
In the considered flexible beam example, the parameters required for the observer design are efficiently  evaluated in terms of eigenvalues and eigenfunctions of the corresponding spectral problem.

In this work, we do not raise the question about the decay rate of the observation error $e(t)$ as $t\to +\infty$.
It appears to be exponential for any finite-dimensional projection of the proposed observation scheme (as discussed in Section~\ref{sec_simulations}),
while in the infinite-dimensional case it may not be exponential (we expect it to be polynomial).
The estimation of the observation error decay rate for the considered class of infinite-dimensional systems is considered as a topic for future investigation.

%\addtolength{\textheight}{-12cm}   % This command serves to balance the column lengths
                                  % on the last page of the document manually. It shortens
                                  % the textheight of the last page by a suitable amount.
                                  % This command does not take effect until the next page
                                  % so it should come on the page before the last. Make
                                  % sure that you do not shorten the textheight too much.

%%%%%%%%%%%%%%%%%%%%%%%%%%%%%%%%%%%%%%%%%%%%%%%%%%%%%%%%%%%%%%%%%%%%%%%%%%%%%%%%


\begin{thebibliography}{99}

\bibitem{L1964}
Luenberger, D. G. (1964). Observing the state of a linear system. IEEE Transactions on Military Electronics, 8(2), 74--80.

\bibitem{ZBPR2015}
Zheng, G., Bejarano, F. J., Perruquetti, W., Richard, J. P. (2015). Unknown input observer for linear time-delay systems. Automatica, 61, 35--43.

\bibitem{AG2018}
Ahmed-Ali, T., Giri, F., Krstic, M., Kahelras, M. (2018). PDE based observer design for nonlinear systems with large output delay. Systems \& Control Letters, 113, 1--8.

\bibitem{YW2019}
Yang, K. Y., Wang, J. M. (2019). Pointwise feedback stabilization of an Euler-Bernoulli beam in observations with time delay. ESAIM: Control, Optimisation and Calculus of Variations, 25, 4.

%\bibitem{H1983}
%Hautus, M. L. (1983). Strong detectability and observers. Linear Algebra and its applications, 50, 353-368.

\bibitem{DM1972}
Delfour, M. C., Mitter, S. K. (1972). Controllability and observability for infinite-dimensional systems. SIAM Journal on Control, 10(2), 329--333.

\bibitem{K1995}
Kratz, W. (1995). Characterization of strong observability and construction of an observer. Linear Algebra and its Applications, 221, 31--40.

\bibitem{K1998}
Kratz, W., Liebscher, D. (1998). A local characterization of observability. Linear Algebra and its Applications, 269(1-3), 115--137.

\bibitem{DI1996}
Demetriou, M. A., Ito, K. (1996). Adaptive observers for a class of infinite dimensional systems. IFAC Proceedings Volumes, 29(1), 5346--5350.

\bibitem{DSX2006}
Deguenon, A. J., Sallet, G., Xu, C. Z. (2006). A Kalman observer for infinite-dimensional skew-symmetric systems with application to an elastic beam. In Proc. of the Second International Symposium on Communications, Control and Signal Processing.

\bibitem{ZS2007}
Zuyev, A., Sawodny, O. (2007). Stabilization and observability of a rotating Timoshenko beam model. Mathematical Problems in Engineering, 2007.

%\bibitem{ZS2006}
%Zuyev, A., Sawodny, O. (2006). Observer design for a flexible manipulator model with a payload. In Proceedings of the 45th IEEE Conference on Decision and Control. 4490--4495.


\bibitem{LLW2018}
Liu, Z., Liu, J., Wang, L. (2018). Disturbance observer based attitude control for flexible spacecraft with input magnitude and rate constraints. Aerospace Science and Technology, 72, 486--492.

\bibitem{PRT2018}
Palomba, I., Richiedei, D., Trevisani, A. (2018). Reduced-order observers for nonlinear state estimation in flexible multibody systems. Shock and Vibration.

\bibitem{MTG2020}
Mattioni, A., Toledo, J., Le Gorrec, Y. (2020). Observer Based Nonlinear Control of a Rotating Flexible Beam. IFAC-PapersOnLine, 53(2), 7479--7484.

\bibitem{DBBSB2021}
Daltin, D. C., Bueno, Á. M., Balthazar, J. M., Serni, P. J. A., Barbosa, R. (2021). State Observer Applied to Position and Vibration Control Using Flexible Link Manipulator. In Vibration Engineering and Technology of Machinery, 281--297. Springer, Cham.

\bibitem{P2022}
Perrusquia, A. (2022). Solution of the linear quadratic regulator problem of black box linear systems using reinforcement learning. Information Sciences, 595, 364--377.

\bibitem{JZ2001f}
Jacob, B., Zwart, H. (2001). Exact observability of diagonal systems with a finite-dimensional output operator. Systems \& Control Letters, 43(2), 101--109.

\bibitem{JZ2001_1}
Jacob, B., Zwart, H. (2001). Exact observability of diagonal systems with a one-dimensional output operator. International Journal of Applied Mathematics and Computer Science, 11(6), 1277--1283.

\bibitem{RW1994}
Russell, D. L., Weiss, G. (1994). A general necessary condition for exact observability. SIAM Journal on Control and Optimization, 32(1), 1--23.

\bibitem{ZK2021}
A. Zuyev, J. Kalosha (2021). Observer design for a flexible structure with distributed and point sensors. \emph{Proceedings of the Institute of Applied Mathematics and Mechanics of NAS of Ukraine, Vol. 35}(2), 125-136. doi:10.37069/1683-4720-2021-35-9.

\bibitem{KZ2021}
Kalosha J. I., Zuyev A. L. (2021). Asymptotic stabilization of a flexible beam with an attached mass. \emph{Ukrainian Mathematical Journal, 73},
1537--1550.

\bibitem{KZB2021}
Kalosha J., Zuyev A. and Benner P. (2021). On the eigenvalue distribution for a beam with attached masses. \emph{Stabilization of Distributed Parameter Systems: Design Methods and Applications.} \emph{Springer International Publishing,} 43--56.

\bibitem{DS1973}
Dafermos, C. M., Slemrod, M. (1973). Asymptotic behavior of nonlinear contraction semigroups. Journal of Functional Analysis, 13(1), 97--106.

\bibitem{L1976}
{LaSalle J.\,P.} (1976). Stability theory and invariance principles. Dynamical systems, 1, 211--222.

\bibitem{Z2003}
Zuyev, A. (2003). Partial asymptotic stability and stabilization of nonlinear abstract differential equations. In 42nd IEEE International Conference on Decision and Control (IEEE CDC 2003), 1321--1326.

\bibitem{Z2006}
Zuyev, A.L. (2006). Partial asymptotic stability of abstract dynamical processes. Ukrainian Mathematical Journal, 58(5), 709--717.

\bibitem{K1992}
{W. Krabs}, On Moment Theory and Controllability of One-dimensional Vibrating Systems and Heating Processes, Springer-Verlag, Berlin, Heidelberg, 1992.
\end{thebibliography}
\end{document}